\newcommand{\COM}[1]{}
\newtheorem{theorem}{Theorem}[section]
\newtheorem{lemma}{Lemma}[section]
\newtheorem{proposition}{Proposition}[section]
\newtheorem{remark}{\normalfont\scshape Remark}[section]
\newenvironment{proof}{\noindent\textsc{Proof.\/}}{}
\newcommand{\subj}[2]{\textsf{AMS 2000 subject classifications.}
Primary {#1}; Secondary {#2}.\newline}
\newcommand{\key}[1]{\textsf{Keywords and phrases.} {#1}.\newline}
\newcommand{\abb}[1]{\textsf{Abbreviated title.} {#1}.}
\newcommand{\fot}[5]{\renewcommand\thefootnote{}
\footnotetext{\parindent=0.0mm \vskip-3mm \subj{#1}{#2}\key{#3}\abb{#4}
\newline\textsf{Date.} \date{\today}}}
\newcommand{\erw}{elephant random walk}
\newcommand{\cfn}{{\cal F}_n}
\newcommand{\cgn}{{\cal G}_n}
\newcommand{\mem}{{\mathfrak M}}
\newcommand{\xnp}{X_{n+1}}
\newcommand{\tnp}{T_{n+1}}
\newcommand{\tnz}{T_n^2}
\newcommand{\tnpz}{T_{n+1}^2}
\newcommand{\exnpf}{E(X_{n+1}\mid \cfn)}
\def\vsb{\hfill$\Box$}
\def\vsp{\vskip-8mm\hfill$\Box$\vskip3mm}
\newcommand{\be}{\begin{equation}}
\newcommand{\ee}{\end{equation}}
\newcommand{\bea}{\begin{eqnarray}}
\newcommand{\eea}{\end{eqnarray}}
\newcommand{\beaa}{\begin{eqnarray*}}
\newcommand{\eeaa}{\end{eqnarray*}}
\newcommand{\beal}{\begin{aligned}}
\newcommand{\eeal}{\end{aligned}}
\newcommand{\var}{\mathrm{Var\,}}
\newcommand{\sumk}{\sum^n_{k=1}}
\newcommand{\sumin}{\sum_{n=1}^\infty}
\newcommand{\ttt}[1]{\quad\mbox{ #1}\quad}
\newcommand{\asto}{\stackrel{a.s.}{\to}}
\newcommand{\pto}{\stackrel{p}{\to}}
\newcommand{\dto}{\stackrel{d}{\to}}
\newcommand{\nifi}{n\to\infty}
\begin{document}
\date{}
\title{\textsf{Elephant random walks with delays}}
\author{Allan Gut\\Uppsala University \and Ulrich Stadtm\"uller\\
Ulm University}
\maketitle

\begin{abstract}\noindent
In the simple random walk the steps are independent, viz., the walker has no memory. In contrast, in the \erw\ (ERW), which was introduced by Sch\"utz and Trimper \cite{erwdef} in 2004,  the walker remembers the whole past and the next step always depends on the whole path so far. One extension, as suggested in a recent paper by Bercu et al.\ \cite{bercu2}, is to allow for delays, that is, to put mass at zero. Our  aim is  to extend known result for the ordinary ERW to  \emph{\erw s with delays\/} (ERWD).
\end{abstract}

\fot{60F05, 60G50,}{60F15, 60J10}{Elephant random walk, delay, law of large numbers, asymptotic (non)normality, method of moments, difference equation, Markov chain}
{ERW with delays}

\section{Introduction}
\setcounter{equation}{0}
\markboth{A.\ Gut and U.\ Stadtm\"uller}{Elephant random walk}

In the classical \emph{simple\/} random walk the steps are equal to plus or minus one and independent---$P(X=1)=1-P(X=-1)=p$, ($0<p<1$); the walker has no memory. This random walk is, in particular, Markovian. Motivated by applications, although interesting in its own right, is the case when the walker has some memory. The so called \erw\ (ERW),  for which ''the next step'' depends on the whole process so far, was introduced by Sch\"utz and Trimper \cite{erwdef} in 2004, the name being inspired by the fact that elephants have a very long memory.

The first, more substantial (theoretical), papers on ERWs are, to the best of our knowledge, the paper by Coletti et al., \cite{Coletti}, and Bercu, \cite{bercu}. Our predecessor, \cite{111}, is devoted to the situation when the elephant has only a limited memory, more precisely, to the case when he or she remembers only some distant past, only a recent past, or a mixture of both. These models
behave very differently mathematically in that some of the walks are still non-Markovian others are Markovian, but there is no convenient martingale around (as in \cite{bercu}). Moreover we do not encounter any phase transitions (as, e.g., in \cite{bercu}).

In the present sequel we introduce the possibility of delays in that the elephant, in addition, has a choice of staying put in every step. After having defined the various models in Section \ref{defs}, and preliminaries in Section \ref{aux},  some results  for general \emph{\erw s with delays\/} (ERWD) are obtained in Section \ref{erwdel}.  In Sections \ref{anfang1} and \ref{anfang2} the elephant remembers the distant  past, and in Section \ref{ende1} the recent past. It is quite common that generalizations of results are proved via extensions of corresponding earlier proofs. In Section \ref{ende1}, however, the scenery differs, as we shall see, drastically from the parent one in \cite{111}. We close with Section \ref{beides},  where we consider a mixed case, more precisely,  when the memory consists of the first step \emph{and\/} the most recent step, followed by some final remarks.

\section{Background}
\setcounter{equation}{0}\label{defs}
The \erw\ is defined as a simple random walk, where, however, the steps are not i.i.d.\ but dependent as follows. The first step $X_1$ equals 1 with probability $s\in [0,1]$ and is equal to $-1$ with probability $1-s$. After $n$ steps, at position $S_n=\sumk X_k$, one defines
\bea\label{20}\xnp=\begin{cases} +X_{K},\ttt{with probability} p\in[0,1], \\-X_{K},\ttt{with probability} 1-p,\end{cases}\eea
where $K$ has a uniform distribution on the integers $1,2,\ldots,n$. With $\cgn=\sigma\{X_1,X_2,\ldots,X_n\}$ this means (formula (2.2) of \cite{bercu}) that  
\bea
E(\xnp\mid\cgn)=(2p-1)\cdot\frac{S_n}{n},\label{21}\eea
 after which, setting  $a_n=\Gamma(n)\cdot\Gamma(2p)/\Gamma(n+2p-1)$, it turns out that
$\{M_n=a_nS_n,\,n\geq 1\}$ is a martingale.
In Section 5 of a  follow-up, \cite{bercu2}, a variation of the model is introduced, allowing for a third possibility, $X_{n+1}=0$.

Our aim is  to extend the results of Bercu \cite{bercu} and of our paper \cite{111} to this three-point case. We first consider the extension of (\ref{20}) to the case
\bea\label{23}\xnp=\begin{cases} +X_{K},\ttt{with probability} p\in[0,1], \\-X_{K},\ttt{with probability} q\in[0,1],\\\phantom{+)}0,\ttt{\,\,  with probability}r\in[0,1],\end{cases}\eea
where  $p+q+r=1$, and where $K$ has a uniform distribution on the integers $1,2,\ldots,n$.  Everything reduces, of course, to \cite{111} if $r=0$.

Next we assume, as in \cite{111}, that the elephant has only a restricted memory by first  considering the case when the elephant remembers  the first step only. In that case,
\[\xnp=\begin{cases} +X_{1},\ttt{with probability} p\in[0,1], \\-X_{1},\ttt{with probability} q\in[0,1], \\\phantom{+)}0,\ttt{ with probability} r\in[0,1].\end{cases}\]
This is the topic of Section \ref{anfang1}. We begin by assuming that $X_1=1$, and generalize our findings in this setting (for simplicity) to the case  $s=p$. Partial sums are denoted by $T_n$, $n\geq1$, when the first variable(s) is/are fixed and  $S_n$ when they are random.

In order to move from $T_n$ to $S_n$ we also need to discuss the behavior of the walk when the initial value equals $-1$ or $0$. In the former case the evolution of the walk is the same \emph{except\/} for the fact that the trend of the walk is reversed, viz., the corresponding walk equals the mirrored image in the time axis. This implies that the mean after $n$ steps equals $-E(T_n)$, but the dynamics being the same implies that the variance remains the same.  When the initial value equals zero the process is a zero-process.

Somewhat more sophisticated is when the memory covers the first two steps (Section \ref{anfang2}).
Technically different is when the elephant only remembers the most recent past, in particular only the last step, in which case
\[\xnp=\begin{cases} +X_{n},\ttt{with probability} p\in[0,1], \\-X_{n},\ttt{with probability}q \in[0,1], \\\phantom{+)}0,\ttt{\,  with probability}r\in[0,1].\end{cases}\]
Finally, in order to avoid special effects we assume throughout that $0<p,q,r<1$. We use the standard $\delta_a(x)$ to  denote the distribution function with a jump of height one at $a$.
Constants $c$ and $C$ are always numerical constants that may change between appearances.

\section{Some auxiliary material}
\setcounter{equation}{0}\label{aux}

For easier access to the arguments below we collect here some auxiliary material.\smallskip

\noindent\textbf{(i)}\quad The following (well-known) result (which is a special case of the Cram\'er--Slutsky theorem) will be used in order to go from a special case to a more general one.

\begin{proposition}\label{knep} Let $\{U_n,\,n\geq1\}$ be a sequence of random variables, and suppose that $V$ is independent of all of them. If $U_n\dto U$ as $\nifi$, then $U_nV\dto UV$ as $\nifi$.
\end{proposition}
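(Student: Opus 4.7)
The plan is to work with characteristic functions, conditioning on $V$ and then invoking L\'evy's continuity theorem together with dominated convergence.

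For an arbitrary $t\in\mathbb R$, the assumed independence of $U_n$ and $V$ lets me condition on $V$ to obtain
\begin{equation*}
E\bigl[e^{itU_nV}\bigr] \;=\; E\bigl[\varphi_{U_n}(tV)\bigr],
\end{equation*}
where $\varphi_{U_n}(s)=E[e^{isU_n}]$ is the characteristic function of $U_n$. Since $U_n\dto U$, L\'evy's continuity theorem gives $\varphi_{U_n}(s)\to\varphi_U(s)$ for every fixed $s$, so in particular $\varphi_{U_n}(tV(\omega))\to\varphi_U(tV(\omega))$ pointwise in $\omega$. The uniform bound $|\varphi_{U_n}|\le 1$ makes the dominated convergence theorem applicable, yielding
\begin{equation*}
E\bigl[\varphi_{U_n}(tV)\bigr] \;\longrightarrow\; E\bigl[\varphi_U(tV)\bigr]\ttt{as}\nifi.
\end{equation*}

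Next, introducing (on a possibly enlarged probability space) a random variable $\tilde U$ distributed as $U$ and independent of $V$, a second use of conditioning identifies the limit as $E[\varphi_U(tV)]=E[e^{it\tilde UV}]$. Thus the characteristic function of $U_nV$ converges pointwise to that of $\tilde UV$, and a final application of L\'evy's theorem produces $U_nV\dto\tilde UV$, which is precisely the meaning of $U_nV\dto UV$ in the statement (the product on the right hand side must be interpreted with an independent copy of $U$, since $V$ is independent of every $U_n$).

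I do not anticipate any genuine obstacle, as this is a textbook manipulation; a fully parallel argument via joint convergence $(U_n,V)\dto(\tilde U,V)$ (again checked through the factorization of characteristic functions) combined with the continuous mapping theorem applied to the multiplication map would work equally well. The only delicate point is the bookkeeping required to realize $\tilde U$ independent of $V$ and thus to give the right hand side $UV$ an unambiguous meaning.
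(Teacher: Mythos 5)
Your argument is correct and complete. The paper itself gives no proof of this proposition: it is merely stated as a well-known special case of the Cram\'er--Slutsky theorem, so there is nothing to match your argument against line by line. Your characteristic-function route --- conditioning on $V$ to get $E[e^{itU_nV}]=E[\varphi_{U_n}(tV)]$, passing to the limit by dominated convergence with the bound $|\varphi_{U_n}|\le 1$, identifying the limit as the characteristic function of $\tilde UV$ with $\tilde U\deq U$ independent of $V$, and closing with L\'evy's continuity theorem --- is sound; the only cosmetic remark is that the first use of ``L\'evy's theorem'' (that $U_n\dto U$ implies $\varphi_{U_n}(s)\to\varphi_U(s)$ pointwise) is really just the definition of weak convergence tested on the bounded continuous functions $x\mapsto e^{isx}$, while the genuine continuity theorem is only needed at the final step, where it applies because the limit is an honest characteristic function. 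The alternative you sketch, namely establishing joint convergence $(U_n,V)\dto(\tilde U,V)$ and applying the continuous mapping theorem to $(u,v)\mapsto uv$, is the argument closest in spirit to the Cram\'er--Slutsky reference the authors invoke; it buys slightly more (joint convergence with any continuous functional of the pair) at the cost of one extra step, whereas your direct computation is shorter and self-contained. Your caveat about interpreting the right-hand side $UV$ as a product of independent factors is exactly the right reading, and it is how the proposition is used later in the paper (e.g.\ in Theorem \ref{thmgerw}, where the limit is written as a mixture over the law of $Y$).
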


\noindent\textbf{(ii)}\quad Next is a result concerning the case of a restricted memory.
Let $\{S_n,\,n\geq1\}$ be an ERW, let $\{\cfn,\,n\geq1\}$ denote the   $\sigma$-algebras generated by the memory of the elephant and let $\cgn= \sigma\{X_1,X_2,\ldots,X_n\}$ stand for  the full memory. Following is an extension of (\ref{21}).    Let $\mem =$ the memory of the elephant, and set $I_n=\{i\leq n: i \in \mem\}$.  Then,
\bea\label{uli}
E(\xnp\mid \cfn)
=(p-q)\cdot\frac{\sum_{i\in I_n}X_i}{|I_n|}.
\eea
We also need a formula for the case when we condition on steps that are not contained in the memory. In words, if they do not, the elephant does not remember them, and, hence, cannot choose among them in a following step. Technically, let $I\subset \{1,2,\ldots,n\}$ be an arbitrary set of indices, such that $I\cap I_n=\emptyset$. Then
\bea\label{uli2}
E(\xnp\mid \sigma\{I_n\cup I\})=E(\xnp\mid\cfn)=(p-q)\frac{\sum_{i\in I_n}X_i}{|I_n|}.\eea
This will be useful several times for the computation of second moments:
\bea\label{uli5}
E(S_{n+1}^2)=E(S_n^2)+\frac{2(p-q)}{|I_n|}E\Big(S_n\sum_{i\in I_n}X_i\Big) +E(\xnp^2).
\eea
\textbf{(iii)}\quad Difference equations appear in the course of the proofs. Here are some well-known facts about linear difference equations.
\begin{proposition}\label{diff}
\emph{(i)} Consider the first order equation
\[x_{n+1}=a\, x_n +b_n \ttt{for} n \ge 1, \ttt{with} x^*_1 \ttt{given.}\]
Then
\[x_n=a^{n-1}x^*_1+\, \sum_{\nu=0}^{n-2}a^\nu b_{n-1-\nu}.\]
If, in addition, $|a|<1$ and $b_n=bn^\gamma$ with $\gamma>-1$, then
\[x_n=\frac{b_{n-1}}{1-a} -\frac{\gamma ab_{n-1}}{n(1-a)^2}\big(1+o(1)\big)\ttt{as}\nifi.\]
\emph{(ii)} If, in particular, $|a|<1$ and $x_{n+1}=ax_n+b$, then
\[x_n=\frac{b}{1-a}+a^{n-1}\big(x_1^*-\frac{b}{1-a}\big)=\frac{b}{1-a}\big(1+o(1)\big)\ttt{as}\nifi.\]
\end{proposition}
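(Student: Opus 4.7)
The plan is to handle the two parts separately, with part (ii) falling out as an immediate corollary of the closed form derived in part (i).

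For the closed form in part (i), I would proceed by induction on $n$. For the base case $n=1$ the formula reads $x_1 = x_1^\ast$, and the inductive step is a direct substitution into the recurrence: assuming the formula holds at $n$, compute
\[
a x_n + b_n = a^{n}x_1^\ast + \sum_{\nu=0}^{n-2}a^{\nu+1}b_{n-1-\nu}+b_n = a^{n}x_1^\ast + \sum_{\mu=0}^{n-1} a^{\mu}b_{n-\mu},
\]
which is the claimed formula at $n+1$.

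The asymptotic part is the main work. With $b_n=bn^\gamma$ the residue term $a^{n-1}x_1^\ast$ is negligible since $|a|<1$, so I would concentrate on
\[
\sum_{\nu=0}^{n-2} a^\nu b_{n-1-\nu} = b_{n-1}\sum_{\nu=0}^{n-2}a^\nu\Big(1-\frac{\nu}{n-1}\Big)^\gamma.
\]
The strategy is to exploit the geometric decay of $a^\nu$ to reduce the sum to its initial segment. Fix, say, $N=\lfloor\log^2 n\rfloor$; the contribution from $\nu>N$ is bounded in absolute value by $C|a|^{N}(n-1)^{\gamma}=o(b_{n-1}/n)$ (recall $\gamma>-1$, so $(n-1-\nu)^\gamma$ is at worst polynomial in $n$). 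On the remaining range $\nu\le N$ we have $\nu/(n-1)\to 0$ uniformly, so the Taylor expansion
\[
\Big(1-\frac{\nu}{n-1}\Big)^\gamma = 1 - \frac{\gamma\nu}{n-1} + O\!\Big(\frac{\nu^2}{n^2}\Big)
\]
is valid. Extending the (absolutely convergent) series back to $\infty$ at the cost of an $O(|a|^N)$ error, I would use the standard evaluations $\sum_{\nu\ge 0}a^\nu = 1/(1-a)$ and $\sum_{\nu\ge 0}\nu a^\nu = a/(1-a)^2$, obtaining
\[
\sum_{\nu=0}^{n-2}a^\nu b_{n-1-\nu} = \frac{b_{n-1}}{1-a} - \frac{\gamma a\, b_{n-1}}{(n-1)(1-a)^2}\bigl(1+o(1)\bigr),
\]
after which $(n-1)^{-1}=n^{-1}(1+o(1))$ gives the stated form.

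Part (ii) follows by taking $b_n\equiv b$ (the case $\gamma=0$) in the explicit formula: the geometric sum $\sum_{\nu=0}^{n-2}a^\nu = (1-a^{n-1})/(1-a)$ yields
\[
x_n=a^{n-1}x_1^\ast + \frac{b(1-a^{n-1})}{1-a} = \frac{b}{1-a}+a^{n-1}\Big(x_1^\ast-\frac{b}{1-a}\Big),
\]
and $|a|<1$ gives the asymptotic. The only real obstacle in the whole argument is bookkeeping in the asymptotic expansion of (i)—in particular, keeping the cutoff $N$ large enough that both the tail beyond $N$ and the Taylor remainder on $\nu\le N$ are $o(b_{n-1}/n)$—but the geometric factor $a^\nu$ makes this routine.
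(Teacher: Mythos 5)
Your proposal is correct. Note, though, that the paper offers no proof of this proposition at all: it is introduced with the words ``Here are some well-known facts about linear difference equations'' and simply asserted, so there is nothing to compare against except the statement itself. Your induction for the closed form is the standard one and is right, and your asymptotic argument --- writing $b_{n-1-\nu}=b_{n-1}\bigl(1-\tfrac{\nu}{n-1}\bigr)^{\gamma}$, cutting the sum at $N=\lfloor\log^2 n\rfloor$ so that the tail and the discarded initial term $a^{n-1}x_1^*$ are superpolynomially small, Taylor-expanding on the initial segment, and completing the series to $\sum_{\nu\ge0}a^\nu=1/(1-a)$ and $\sum_{\nu\ge0}\nu a^\nu=a/(1-a)^2$ --- is a clean and complete verification; in fact what you actually prove is the sharper additive form $x_n=\tfrac{b_{n-1}}{1-a}-\tfrac{\gamma a b_{n-1}}{n(1-a)^2}+o(b_{n-1}/n)$, which is the correct reading of the stated $(1+o(1))$ (the multiplicative form is degenerate when $\gamma=0$, a case covered exactly by part (ii) anyway). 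Part (ii) via the geometric sum is likewise correct. In short: you have supplied a proof where the authors deliberately gave none, and it is a valid one.
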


\section{Elephant random walks with delays}\label{erwdel}
\setcounter{equation}{0}
We thus depart from the model described in (\ref{23}), and set $S_n=\sumk X_k$, $n\geq1$, with $S_0=0$. Since the elephant remembers everything, formula (\ref{uli}) tells us that
\[\exnpf=E(\xnp\mid \cfn)=(p\cdot 1 + q\cdot (-1) +r\cdot0)\frac{S_n}{n} =(p-q)\frac{S_n}{n} ,\]
 so that 
\[E(S_{n+1}\mid \cfn)=S_n+(p-q)\frac{S_n}{n}=\big(1+\frac{p-q}{n}\big)S_n .\]
Setting $\gamma_k=1+\frac{p-q}{k}$, $k\geq1$, and 
$a_n=\prod_{k=1}^{n-1}\gamma_k^{-1}=\frac{\Gamma(p-q+1)\cdot \Gamma(n)}{\Gamma( n+p-q)}$, $n\geq1$, we define
$M_n=a_nS_n$, $n\geq1$, and note that $\{M_n,\,n\geq1\}$ is a martingale (for convenience, see also Problem 10.6 of \cite{g13}). The asymptotics of the martingale determines the asymptotics of the  ERWD.

The next step is to modify \cite{bercu}, formula (A.8). Toward that end we set $\nu_n=\sumk a_k^2$, after which asymptotics of the $\Gamma$-function tells us that
\bea
\nu_n\begin{cases}\,\sim (\Gamma(p-q+1))^2\cdot\dfrac{n^{1-2(p-q)}}{1-2(p-q)},&\ttt{for}p-q<1/2,\\[2mm]
\,\sim \dfrac{\pi}{4}\log n,&\ttt{for}p-q=1/2,\\[2mm]
\,\leq\, C,&\ttt{for}p-q>1/2,\end{cases}\label{nun}
\quad\ttt{as}\nifi,\eea
which determines the diffusive, critical and superdiffusive regimes, respectively. 

From here on the mathematics can be copied and pasted from \cite{bercu} (and obviously modified) plus using the fact that $S_n=0, \; \forall n$ if $X_1=0$. The following result emerges.
\begin{theorem} \label{thm41} \emph{(a)}\quad For $p-q<1/2$,
\[\frac{S_n}{n}\asto0\ttt{as}\nifi\ttt{and}\frac{S_n}{\sqrt{n}}\dto (p+q){\cal N }_{0,\frac{p+q}{1-2(p-q)}}+r \,\delta_0(x)\ttt{as}\nifi.\]
\emph{(b)}\quad For $p-q=1/2$,
\[\frac{S_n}{\sqrt{n}\log n}\asto0\ttt{and}\frac{S_n}{\sqrt{n\log n}}\dto (p+q){\cal N }_{0,p+q}+r\,\delta_0(x)\ttt{as}\nifi.\]
\emph{(c)}\quad For $p-q>1/2$, 
\[\frac{S_n}{n^{p-q}}\asto L\ttt{as}\nifi,\]
where $L$ is a non-degenerate random variable. Moreover,
\[E(L)=\frac{p-q}{\Gamma(1+p-q)}\ttt{ and } E(L^2)=\frac{p+q}{\big(2(p-q)-1\big)\Gamma\big(2(p-q)\big)}.\]
\end{theorem}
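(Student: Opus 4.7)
My plan is to reduce to the non-degenerate case by conditioning on $X_1$, then to run the martingale argument of \cite{bercu} with the extra $(p+q)$ factor replacing the $1$ in the conditional variance. If $X_1=0$ (probability $r$), then at step $2$ the sole available index is $K=1$ with $X_K=0$, forcing $X_2=0$ and, inductively, $X_n\equiv 0$; hence $S_n\equiv 0$, contributing the $r\,\delta_0(x)$ term. If $X_1\neq 0$ (probability $p+q$), the reflection symmetry $X_k\leftrightarrow -X_k$ shows that the conditional distribution of $S_n$ given $X_1=-1$ is the mirror image of that given $X_1=+1$, so one may work with $X_1=1$ throughout, and the two signs recombine into a single symmetric, $(p+q)$-weighted summand.

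For parts (a) and (b), conditional on $X_1=1$, I work with the martingale $M_n=a_nS_n$ already identified in the text. The key identity is $\var(X_{n+1}\mid\cfn)=(p+q)-(p-q)^2(S_n/n)^2$, so the quadratic characteristic $\langle M\rangle_n=\sum_{k=1}^{n-1}a_{k+1}^2\,\var(X_{k+1}\mid\cfn)$ has leading-order asymptotics $(p+q)\nu_n$; the $(p-q)^2(S_k/k)^2$ correction is shown to be of lower order via a bootstrap starting from the crude $L^2$-estimate $E(S_n^2)=O(\nu_n/a_n^2)=O(n)$ and Kronecker's lemma. The Lindeberg condition is immediate since $|\Delta M_{n+1}|\le 2a_{n+1}\to 0$, so the martingale CLT yields
\[
\frac{M_n}{\sqrt{(p+q)\nu_n}}\dto\mathcal{N}(0,1);
\]
dividing by $a_n$ and plugging in the asymptotics of $\nu_n$ from (\ref{nun}) gives the $\sqrt n$- and $\sqrt{n\log n}$-normalized Gaussian limits stated in (a) and (b). The strong laws $S_n/n\asto 0$ and $S_n/(\sqrt n\log n)\asto 0$ follow from the same $L^2$-bound together with Chow's maximal inequality for martingales, exactly as in \cite{bercu}.

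For part (c), $p-q>1/2$ makes $\nu_n$ bounded, so $\{M_n\}$ is an $L^2$-bounded martingale; by Doob, $M_n\to M_\infty$ almost surely and in $L^2$, and setting $L=M_\infty/\Gamma(1+p-q)$ gives $S_n/n^{p-q}\asto L$. The first moment comes directly from the martingale property: $E(M_\infty)=E(M_1)=a_1 E(X_1)=p-q$, so $E(L)=(p-q)/\Gamma(1+p-q)$. The second moment follows from telescoping
\[
E(M_n^2)=E(M_1^2)+\sum_{k=1}^{n-1}a_{k+1}^2\,E\var(X_{k+1}\mid\cfn),
\]
with $E(M_1^2)=E(X_1^2)=p+q$ and $E\var(X_{k+1}\mid\cfn)=(p+q)-(p-q)^2 E(S_k^2)/k^2$; passing to $n\to\infty$ and evaluating the resulting tail series in closed form via the Gamma-ratio identities underlying Proposition \ref{diff} yields $E(L^2)=(p+q)/\bigl((2(p-q)-1)\Gamma(2(p-q))\bigr)$.

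The principal obstacle is the bootstrap that decouples the main $(p+q)\nu_n$ term in $\langle M\rangle_n$ from the $(p-q)^2(S_n/n)^2$ correction, since this correction a priori involves the very object being analyzed; the circularity is resolved by iterating from the crude initial $L^2$-bound, a standard maneuver that transfers from \cite{bercu} with only cosmetic modifications.
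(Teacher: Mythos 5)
Your proposal is correct and follows essentially the same route as the paper, which sets up the martingale $M_n=a_nS_n$, records the asymptotics of $\nu_n=\sum_{k\le n}a_k^2$, and then explicitly defers to the martingale CLT/convergence arguments of Bercu, handling $X_1=0$ separately as the zero process that produces the $r\,\delta_0$ atom. Your write-up in fact supplies more detail (the conditional variance identity, the bootstrap decoupling the $(p-q)^2(S_k/k)^2$ correction, and the telescoped second-moment computation for $E(L^2)$) than the paper itself does.
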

\begin{remark}\emph{The first four moments of $L$ are given in \cite{bercu} for the case $r=0$. All moments are given in \cite{bercu2}.}
\end{remark}
\begin{remark}\emph{ For $r=0$ (and, thus, $q=1-p$) we rediscover the results of Bercu \cite{bercu} (with $s=p$). If $q=0$ the steps are either the same as the chosen step or zero. Phase transition occurs at $p=1/2$. If $p=0$ the steps oscillate unless they are zero, and the elephant is superdiffusive.}\vsb\end{remark}

Next we exploit a device from \cite{111} in order to extend Theorem \ref{thm41} to allow for general step sizes. Toward that end, let $\{\widetilde{S}_n,\,n\geq1\}$ be an ERWD, and suppose that $Y$ is a random variable with distribution function $F_Y$ that is independent of the walk. If $\widetilde{S}_n/b_n\asto Z$ as $\nifi$ for some normalizing positive sequence $b_n\to\infty$ as $\nifi$, and some random variable $Z$, it follows trivially that $Y\widetilde{S}_n/b_n\asto YZ$ as $\nifi$. Now consider the ERWD for which $\widetilde{X}_1\equiv 1$, and let the random variables $\widetilde{X}_n$, $n\ge 2$, be constructed as in Section 2 with this special $\widetilde{X}_1$ as starting point.   Furthermore, let $Y$ be a random variable, independent of $\{ \widetilde{X}_n,\,n\geq1\}$,  and consider,
for $n\geq1$, $X_n=Y\cdot \widetilde{X}_n$, and, hence, $S_n= Y\cdot \widetilde{S}_n$. 

The following theorems hold for $S_n=Y\tilde{S}_n$. The proofs are ''the same'' as in \cite{111}, Section 4.
\begin{theorem}\quad
\emph{(a)}\quad For $p-q<1/2$, \quad $\dfrac{S_n}{n}\asto 0\ttt{as}\nifi$;\\[1mm]
\emph{(b)}\quad For $p-q=1/2$, \quad $\dfrac{S_n}{\sqrt{n}\log n}\asto 0\ttt{as}\nifi$;\\[1mm]
\emph{(c)}\quad For $p-q>1/2$, \quad $\dfrac{S_n}{n^{p-q}}\asto YL\ttt{as}\nifi$,\\[1mm]
where $L$ is a non-degenerate random variable.
\end{theorem}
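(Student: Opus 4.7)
The plan is to reduce everything to Theorem \ref{thm41} applied to the auxiliary walk $\{\widetilde{S}_n\}$ started with $\widetilde{X}_1\equiv 1$, and then to transfer the resulting almost-sure limits to $S_n=Y\widetilde{S}_n$ by exploiting the independence of $Y$ from the walk.

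First I would verify that the almost-sure parts of Theorem \ref{thm41} carry over verbatim to $\{\widetilde{S}_n\}$ when $\widetilde{X}_1\equiv 1$. The martingale $M_n=a_n\widetilde{S}_n$ constructed in Section \ref{erwdel}, the asymptotic behaviour of $\nu_n=\sum_{k=1}^n a_k^2$ recorded in (\ref{nun}), and the resulting trichotomy all rest only on the one-step identity $E(\widetilde{X}_{n+1}\mid\widetilde{X}_1,\ldots,\widetilde{X}_n)=(p-q)\widetilde{S}_n/n$, which holds for $n\geq 1$ regardless of the law of $\widetilde{X}_1$. Hence one reads off $\widetilde{S}_n/n\asto 0$ in the diffusive regime, $\widetilde{S}_n/(\sqrt{n}\log n)\asto 0$ in the critical regime, and $\widetilde{S}_n/n^{p-q}\asto L$ in the superdiffusive regime, with $L$ non-degenerate (no $\delta_0$ component appears here because we have effectively conditioned on $\widetilde{X}_1=1$).

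Second, I would pass from $\widetilde{S}_n$ to $S_n=Y\widetilde{S}_n$ using independence of $Y$ from $\{\widetilde{X}_n\}$. Almost-sure convergence is stable under multiplication by an almost-surely finite factor: on the full-measure event where $\widetilde{S}_n/b_n\to Z$, one has $Y\widetilde{S}_n/b_n\to YZ$ pointwise. Plugging in the normalizations $b_n=n$, $\sqrt{n}\log n$, $n^{p-q}$ with the corresponding limits $Z=0$, $0$, $L$ yields (a), (b) and (c) immediately.

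The only mildly delicate point is the bookkeeping in the first step, namely checking that the martingale argument and the variance recursion behind Theorem \ref{thm41} really do survive the replacement of the random $X_1$ by the deterministic value $1$, so that the trichotomy and non-degeneracy of $L$ remain intact. Once this is granted, the theorem is nothing more than Theorem \ref{thm41} multiplied by the independent scalar $Y$, following the very same device already used in \cite{111}, Section 4.
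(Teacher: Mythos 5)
Your proposal is correct and follows essentially the same route as the paper: the paper's own argument is precisely the device described just before the theorem, namely applying Theorem \ref{thm41} to the auxiliary walk started from $\widetilde{X}_1\equiv 1$ (where the martingale $M_n=a_n\widetilde{S}_n$ and the asymptotics of $\nu_n$ are unaffected by fixing the first step) and then multiplying the almost-sure limits by the independent factor $Y$. Your extra care about checking that the trichotomy survives the deterministic initial step is exactly the implicit content of the paper's reference to the corresponding argument in \cite{111}, Section 4.
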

Next we consider convergence in distribution.  
\begin{theorem}\label{thmgerw}
For $p-q<1/2$,
\[ \frac{S_n}{\sqrt{n}} \dto (p+q)\int_{\mathbb{R}\backslash\{0\}}  {\cal N }_{0,\frac{1}{1-2(p-q)}} (\cdot/|t|) \,dF_Y(t) +\big((p+q)P(Y=0)+r\big)\cdot\delta_{0}(\cdot)\ttt{as}\nifi.\]
 Moreover, if $E(Y^2)<\infty$, then $E(S_n/\sqrt{n})\to 0 $ and $E((S_n/\sqrt{n})^2)\to E(Y^2)/(1-2(p-q))$ as $\nifi$.
\end{theorem}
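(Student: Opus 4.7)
The plan is to reduce the statement to the case $Y\equiv 1$, which is covered by Theorem~\ref{thm41}(a) applied to $\{\widetilde{S}_n\}$, and then to transfer to a general independent multiplier $Y$ via the Cram\'er--Slutsky-type Proposition~\ref{knep}. First, Theorem~\ref{thm41}(a) applied to the walk $\{\widetilde{S}_n\}$ with $\widetilde{X}_1\equiv 1$ yields $\widetilde{S}_n/\sqrt{n}\dto\widetilde{L}$, where $\widetilde{L}$ has the mixture law $(p+q)\,\mathcal{N}_{0,1/(1-2(p-q))}+r\,\delta_0$; that is, with probability $p+q$ it is a centered normal with the indicated variance and with probability $r$ it equals $0$. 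Since $Y$ is independent of the walk, Proposition~\ref{knep} with $U_n=\widetilde{S}_n/\sqrt{n}$ and $V=Y$ immediately gives
\[
\frac{S_n}{\sqrt n}=Y\cdot\frac{\widetilde{S}_n}{\sqrt n}\dto Y\widetilde{L},
\]
so the remaining task is to identify the distribution of the product $Y\widetilde{L}$.

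To do this I would condition on $Y$. For $Y=t\neq 0$, the random variable $t\widetilde{L}$ has the rescaled mixture law $(p+q)\,\mathcal{N}_{0,t^2/(1-2(p-q))}+r\,\delta_0$, whose CDF at $x$ equals
\[
(p+q)\,\mathcal{N}_{0,1/(1-2(p-q))}(x/|t|)+r\,\mathbf{1}(x\geq 0);
\]
for $Y=0$ the product vanishes identically and contributes an atom at $0$. Integrating these conditional CDFs against $dF_Y(t)$ produces the $(p+q)$-weighted integral over $\mathbb{R}\setminus\{0\}$ announced in the theorem, while the two independent sources of point mass at zero --- the event $\{Y=0\}$ on one hand and the internal $r\,\delta_0$ component of $\widetilde{L}$ on $\{Y\neq 0\}$ on the other --- combine into an atom of total weight $r\,P(Y\neq 0)+P(Y=0)=(p+q)\,P(Y=0)+r$, matching the formula.

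For the moment statement under $E(Y^2)<\infty$, independence factors $E(S_n/\sqrt n)=E(Y)\,E(\widetilde{S}_n/\sqrt n)$ and $E((S_n/\sqrt n)^2)=E(Y^2)\,E(\widetilde{S}_n^2)/n$, so it suffices to control the first two moments of $\widetilde{S}_n$ itself. The martingale identity $E(a_n\widetilde{S}_n)=a_1\widetilde{X}_1=1$ gives $E(\widetilde{S}_n)=1/a_n\sim c\,n^{p-q}=o(\sqrt n)$ because $p-q<1/2$, and the second-moment recursion~\eqref{uli5} specializes here to $E(\widetilde{S}_{n+1}^2)=(1+2(p-q)/n)\,E(\widetilde{S}_n^2)+E(\widetilde{X}_{n+1}^2)$, which, via the linear ansatz $E(\widetilde{S}_n^2)\sim c\,n$ justified by Proposition~\ref{diff}, yields $E(\widetilde{S}_n^2)/n\to 1/(1-2(p-q))$ and hence the stated limit $E(Y^2)/(1-2(p-q))$. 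The main obstacle I expect is the bookkeeping in the middle paragraph: one has to keep the two mechanisms producing an atom at zero separate before combining them, and verify that the weights reassemble into the precise form stated in the theorem.
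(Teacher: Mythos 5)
Your overall strategy --- reduce to the walk with $\widetilde X_1\equiv1$, multiply by the independent factor $Y$ via Proposition \ref{knep}, and identify the product law by conditioning on $Y$ --- is exactly the route the paper takes; the paper itself offers no more than a pointer to the device $S_n=Y\widetilde S_n$ and to Section 4 of \cite{111}. Your bookkeeping of the two sources of mass at the origin, and the identity $r\,P(Y\neq0)+P(Y=0)=(p+q)P(Y=0)+r$, is correct and is precisely what reassembles the stated formula; the moment argument via $E(a_n\widetilde S_n)=1$ and the recursion (\ref{uli5}) is also the intended one.

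There is, however, a genuine gap at your very first step: the law you assign to $\widetilde L$ is not what Theorem \ref{thm41}(a) says. That theorem concerns the walk with \emph{random} first step, its normal component has variance $\frac{p+q}{1-2(p-q)}$ rather than $\frac{1}{1-2(p-q)}$, and its atom $r\,\delta_0$ is produced by the event $\{X_1=0\}$ --- an event that is excluded once you fix $\widetilde X_1\equiv1$. So you cannot simply ``apply Theorem \ref{thm41}(a) to $\widetilde S_n$''; you would have to prove, for the conditioned walk, that $\widetilde S_n/\sqrt n$ still converges to a mixture carrying an atom of mass $r$ at the origin (now necessarily generated internally, by the proliferation of zero steps, since $E(\widetilde X_{n+1}^2\mid\mathcal F_n)=(p+q)\sum_{k\le n}\widetilde X_k^2\,/\,n$) and whose normal part has the variance you claim. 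The same issue resurfaces in the moment computation, where the recursion reads $E(\widetilde S_{n+1}^2)=\bigl(1+\tfrac{2(p-q)}{n}\bigr)E(\widetilde S_n^2)+E(\widetilde X_{n+1}^2)$ and you tacitly take $E(\widetilde X_{n+1}^2)\to1$, whereas in the delayed model $E(\widetilde X_{n+1}^2)\le p+q<1$. In short, the reduction and the reassembly are right and match the paper's intent, but the key input --- the limit law of $\widetilde S_n/\sqrt n$ for the walk started at $\widetilde X_1\equiv1$ --- is asserted rather than established, and as stated it is not a corollary of Theorem \ref{thm41}(a).
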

\begin{remark}\emph{For the critical case one similarly obtains 
\[ \frac{S_n}{\sqrt{n\log n}} \dto (p+q)\int_{\mathbb{Y}\backslash\{0\}}  {\cal N }_{0,p+q} (\cdot/|t|) \,dF_Y(t) +\big((p+q)P(Y=0)+r\big)\cdot\delta_{0}(\cdot)\ttt{as}\nifi.\]
The supercritical case has a different evolution and no analogous result exists.}\vsb\end{remark}

\section{Remembering only the distant past 1}\label{anfang1}
\setcounter{equation}{0}
Suppose that the elephant only remembers the first step, i.e., that $\cfn=\sigma\{X_1\}$, and, initially, that $X_1=1$ (recall that partial sums are then denoted with the letter $T$). Then, for all $n\geq 1$,
\[\exnpf=E(\xnp\mid X_1)=p\cdot 1 + q\cdot (-1) +r\cdot 0=p-q=E(\xnp),\]
 and, hence, 
\[E(T_{n+1})= 1+n\,(p-q).\]
Moreover, applying (\ref{uli5}) to $T_n$, we find that
\beaa
E(\tnpz)&=& E(T_n^2)+2\,(p-q)E(T_n)+p+q\\
&=&E(\tnz)+2(p-q)\big(1+(n-1)(p-q)\big)+p+q\\
&=&E(\tnz)+2(p-q)^2n +2(p-q)(1-p+q)+p+q,
\eeaa
which, after telescoping, yields
\[E(\tnpz)=1+(p-q)^2n(n+1)+\big(2(p-q)(1-p+q)+p+q\big)n,\]
and, finally, 
\[\var (\tnp) =n\big(p+q-(p-q)^2\big).\]
We note that mean and variance coincide with those of a a \emph{delayed} simple random walk, except for the fact that the first step is always equal to one. As in our predecessor, \cite{111}, one can, in fact, prove that this is, indeed, the case.  
This permits us to draw the following conclusion.
\begin{proposition}\label{prop51}  The strong law of large numbers, the central limit theorem, and the law of the iterated logarithm all hold for  $\{T_n,\,n\geq1\}$.
\end{proposition}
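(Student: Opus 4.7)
The plan is to observe that when the memory is $\mathcal{F}_n=\sigma\{X_1\}$ with $X_1=1$, the construction of $X_{n+1}$ in Section \ref{defs} only uses $X_1$ together with fresh randomness independent of the past. Concretely, for each $n\geq 1$, $X_{n+1}$ equals $+1,-1,0$ with probabilities $p,q,r$ depending only on $X_1=1$, via an auxiliary randomization independent of $X_2,\ldots,X_n$. Hence, conditionally on $X_1=1$, the random variables $X_2, X_3,\ldots$ are i.i.d.\ taking values $1,-1,0$ with probabilities $p,q,r$.

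Given that, the proof reduces to classical limit theorems for i.i.d.\ sums. First I would write
\[
T_n = 1 + \sum_{k=2}^n X_k,
\]
where the summands are i.i.d.\ with mean $p-q$ and variance $\sigma^2 := p+q-(p-q)^2>0$ (the latter being strictly positive since $0<p,q,r<1$). This identification is consistent with the moment computations already done in the excerpt. The fixed summand $1$ plays no role in any of the asymptotic statements.

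Having established the i.i.d.\ structure, the three assertions follow immediately from textbook results:
\begin{itemize}
\item Kolmogorov's strong law yields $T_n/n \asto p-q$ as $\nifi$;
\item the Lindeberg--L\'evy central limit theorem yields $(T_n - n(p-q))/\sqrt{n\sigma^2}\dto \mathcal{N}_{0,1}$;
\item the Hartman--Wintner law of the iterated logarithm yields
\[
\limsup_{\nifi}\frac{T_n - n(p-q)}{\sqrt{2\sigma^2 n\log\log n}} = 1 \quad\text{a.s.,}
\]
and the corresponding $\liminf=-1$.
\end{itemize}

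The only non-routine step is the verification that, conditionally on $X_1=1$, the sequence $(X_n)_{n\geq 2}$ is genuinely i.i.d.; this is a direct consequence of the dynamics in (\ref{23}) restricted to the memory $\mathcal{F}_n=\sigma\{X_1\}$, since the index $K$ is forced to equal $1$ and the $\pm/0$ choices at different times are independent. Once this is in hand there is nothing further to prove. The case $X_1=-1$ yields a mirror-image walk with the same variance, and $X_1=0$ yields the degenerate zero process, so mixing over $X_1$ does not affect the validity of the three limit theorems (only the location of the limits is reflected for the CLT and LIL).
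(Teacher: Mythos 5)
Your argument is correct and follows essentially the same route as the paper: the paper likewise identifies $\{T_n\}$ (given $X_1=1$) as a delayed simple random walk apart from the fixed first step, i.e.\ with i.i.d.\ subsequent increments taking values $1,-1,0$ with probabilities $p,q,r$, and then invokes the classical SLLN, CLT and LIL. Your explicit verification of the i.i.d.\ structure is just the detail the paper defers to its predecessor, so there is nothing to add.
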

 If, on the other hand, the first step is equal to $-1$, then, by symmetry,
$E(\tnp)=-n(p-q)-1$, the variance remains the same (recall the discussion toward the end of Section \ref{defs}), and
Proposition \ref{prop51} applies. If $X_1=0$ everything is trivial.  This implies, for example,  the following strong law:
\bea
\frac{S_n}{n}&\asto& (p-q)\cdot I\{X_1=1\}-(p-q)\cdot I\{X_1=-1\}+0\cdot I\{X_1=0\}\nonumber\\
&=&(p-q)\cdot\textrm{sign}(X_1)\ttt{as}\nifi.\label{LLN}
\eea
As for distributional convergence, we are (asymptotically) confronted with two normal distributions and the $\delta_0$-distribution. Moreover, if $X_1=\pm1$,  then $\var( S_n) \asymp n^2$  as $\nifi$, and an ordinary CLT is not valid.

However, the following limit result is always available:
\begin{theorem}\label{thm52} Let $S_n=\sumk X_k$.Then, \\[2mm]
\hspace*{1cm}$\displaystyle\frac{S_n}{n}\dto\begin{cases}\phantom{-(}p-q,&\ttt{with probability}p,\\[2mm]
\phantom{-(((} 0,&\ttt{with probability}r,\\[2mm]
 -(p-q),&\ttt{with probability}q,\end{cases}\ttt{as}\nifi$.\\[2mm]
Moreover,  $E(S_n/n)\to (p-q)^2\ttt{and} \var (S_n/n)\to  (p-q)^2\big(p+q-(p-q)^2\big)\ttt{as} \nifi$.
\end{theorem}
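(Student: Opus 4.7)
The plan is to condition throughout on $X_1\in\{-1,0,1\}$, since the memory consists only of this single step. On the event $\{X_1=1\}$ the walk coincides with $T_n$, and Proposition~\ref{prop51} gives $T_n/n\asto p-q$. On $\{X_1=-1\}$ the mirror symmetry recalled in Section~\ref{defs} shows that the conditional process is distributed as the negative of the $X_1=1$ case, so $S_n/n\asto-(p-q)$; on $\{X_1=0\}$ the walk is identically zero. These three cases are precisely what is packaged in equation~(\ref{LLN}), namely $S_n/n\asto(p-q)\cdot\textrm{sign}(X_1)$ almost surely. Since $\textrm{sign}(X_1)$ takes the values $1,0,-1$ with probabilities $p,r,q$, almost sure convergence delivers the stated distributional limit for free.

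For the moment assertions I would use the tower property and the conditional variance formula. The section has already produced $E(T_n\mid X_1=1)=1+(n-1)(p-q)$ and $\var(T_n\mid X_1=1)=(n-1)\bigl(p+q-(p-q)^2\bigr)$; the mirror symmetry gives the corresponding expressions with a minus sign on $\{X_1=-1\}$ and preserves the conditional variance, while both are zero on $\{X_1=0\}$. Averaging the conditional mean against the law of $X_1$ yields
\[E\!\left(\frac{S_n}{n}\right)=\frac{p-q}{n}+\frac{n-1}{n}(p-q)^2\longrightarrow(p-q)^2.\]
The conditional variance decomposition
\[\var(S_n/n)=E\bigl[\var(S_n/n\mid X_1)\bigr]+\var\bigl(E[S_n/n\mid X_1]\bigr)\]
then handles the variance: the first summand equals $(p+q)(n-1)\bigl(p+q-(p-q)^2\bigr)/n^2=O(1/n)$, while the second is the variance of a three-valued random variable taking the values $\pm\bigl(1/n+(n-1)(p-q)/n\bigr)$ and $0$ with weights $p,q,r$, which converges to $\var\bigl((p-q)\cdot\textrm{sign}(X_1)\bigr)=(p-q)^2\bigl(p+q-(p-q)^2\bigr)$.

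There is no genuine obstacle; the one mild subtlety is that moment convergence does not follow from the distributional limit alone and must be extracted from the explicit conditional moments, which is precisely what the decomposition above accomplishes. Essentially the whole theorem is a bookkeeping exercise once one recognises that conditional on $X_1$ the walk is either $T_n$, $-T_n$, or the zero process.
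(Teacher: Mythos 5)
Your argument for the distributional limit is exactly the paper's: condition on $X_1$, use Proposition \ref{prop51} (equivalently (\ref{LLN})) on $\{X_1=\pm1\}$ and triviality on $\{X_1=0\}$, and read off the three-point limit law from the distribution of $\mathrm{sign}(X_1)$. The only divergence is in the moment claims. You compute $E(S_n/n)$ and $\var(S_n/n)$ explicitly via the tower property and the conditional variance decomposition; the computation is correct and gives the stated limits. The paper instead disposes of this in one line: since $|S_n/n|\leq 1$ for all $n$, the sequence is uniformly bounded, hence uniformly integrable, so convergence in distribution already implies convergence of all moments. Your remark that ``moment convergence does not follow from the distributional limit alone'' is therefore slightly off the mark in this particular setting --- boundedness is exactly the extra ingredient the paper invokes, and it makes your explicit bookkeeping unnecessary (though it does serve as a useful consistency check on the limiting values).
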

\begin{proof} If $X_1=\pm 1$ we know from above that $E(T_n)=\pm (1+(n-1)(p-q))$, and that $\var(T_n)=n\,\big(p+q-(p-q)^2\big)$. This tells us that,  $\frac{T_n}{n}\pto \pm (p-q)$ as $\nifi$. The case $X_1=0$ is trivial. The conclusion follows.

Moment convergence is immediate, since $|S_n/n|\leq1$ for all $n$.\vsb\end{proof}
\begin{remark}\label{remark51}\emph{(i) Parallelling the corresponding remark in \cite{111} we may interpret the limit such that the random walk at hand, on average, behaves, asymptotically, like a \emph{delayed\/} coin-tossing random walk.}\\[1mm]
\emph{(ii) 
An alternative way of phrasing the conclusion of the theorem is that}
\[F_{S_n/n}(x)\to p\cdot \delta_{p-q}(x)+r\delta_0+q\cdot\delta_{-(p-q)}(x)\ttt{as}\nifi.\]\vsp
\end{remark}
If, on the other hand, we use a \emph{random normalization\/} we obtain the following result:
\begin{theorem}\label{thm53}  Let $S_n=\sumk X_k$.Then, \\[2mm]
\emph{(a)} \hspace*{1cm}$\displaystyle
\frac{S_n-n(p-q)X_1}{\sqrt{n(p+q-(p-q)^2)}} \dto (p+q){\cal N}_{0,1}+r\delta_0
 \ttt{as}\nifi$;\\[2mm]
\emph{(b)} \hspace*{1cm}$\displaystyle
\frac{S_n-n(p-q)X_1}{n} \asto 0 \ttt{as}\nifi$.
\end{theorem}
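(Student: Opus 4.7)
The key observation is that, conditionally on $X_1$, the subsequent steps form an i.i.d.\ sequence. Since the elephant's memory is $\cfn=\sigma\{X_1\}$ for every $n$, the recursion from Section \ref{defs} (applied with $K\equiv 1$) says that, given $X_1$, the variables $X_2,X_3,\ldots$ are conditionally independent, each taking the values $+X_1,\,-X_1,\,0$ with probabilities $p,q,r$. Equivalently, one may write $X_n=X_1\,Y_n$ for $n\ge 2$, where $\{Y_n,\, n\ge 2\}$ is a sequence of i.i.d.\ random variables independent of $X_1$ with $P(Y_n=1)=p$, $P(Y_n=-1)=q$, $P(Y_n=0)=r$; on $\{X_1=0\}$ the whole walk is identically zero, so both claims are trivial there. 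This yields the clean identity
\[S_n - n(p-q)\,X_1 = X_1\Big(1-(p-q) + \sum_{k=2}^n\big(Y_k-(p-q)\big)\Big).\]

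Part (b) now follows at once from the classical strong law of large numbers applied to the centred i.i.d.\ sum $\sum_{k=2}^n(Y_k-(p-q))$: divided by $n$ this sum tends to $0$ a.s., the constant term $1-(p-q)$ is negligible, and $|X_1|\le 1$. For part (a), the classical CLT applied to the same sequence (mean $0$, variance $p+q-(p-q)^2$) gives
\[Z_n := \frac{\sum_{k=2}^n\big(Y_k-(p-q)\big)}{\sqrt{n\,\big(p+q-(p-q)^2\big)}} \dto {\cal N}_{0,1} \ttt{as} \nifi,\]
the contribution of the $1-(p-q)$ summand vanishing after division by $\sqrt{n}$. Since $Z_n$ is a function of $\{Y_k\}$, it is independent of $X_1$, and Proposition \ref{knep} delivers
\[\frac{S_n - n(p-q)\,X_1}{\sqrt{n\,\big(p+q-(p-q)^2\big)}} \dto X_1\cdot Z \ttt{as} \nifi,\]
with $Z\sim{\cal N}_{0,1}$ independent of $X_1$.

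It remains only to identify the distribution of $X_1\cdot Z$. Conditioning on $X_1$ and using the symmetry of $Z$ (so that $-Z\deq Z$), we see that on $\{X_1=\pm 1\}$, of total probability $p+q$, the product is standard normal, whereas on $\{X_1=0\}$, of probability $r$, it is identically zero. This mixture is exactly $(p+q){\cal N}_{0,1}+r\delta_0$, finishing (a). The only mildly nontrivial ingredient in the argument is the initial conditional-i.i.d.\ representation $X_n=X_1Y_n$; once it is in hand, both conclusions are a direct application of the classical limit theorems combined with Proposition \ref{knep}.
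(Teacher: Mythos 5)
Your proof is correct and follows essentially the same route as the paper's: both rest on the observation that, conditionally on $X_1$, the steps $X_2,X_3,\ldots$ are i.i.d.\ taking the values $X_1,-X_1,0$ with probabilities $p,q,r$, so that the classical SLLN and CLT apply on each of the events $\{X_1=1\}$, $\{X_1=-1\}$, $\{X_1=0\}$. The paper phrases this as a total-probability decomposition over the value of $X_1$ invoking Proposition \ref{prop51}, whereas you package it as the multiplicative coupling $S_n=X_1\big(1+\sum_{k=2}^nY_k\big)$ combined with Proposition \ref{knep}; the two arguments are equivalent.
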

\begin{proof} For (a) we use the fact that
\beaa \lefteqn{P\Big(\frac{S_n-n(p-q)X_1}{\sqrt{n}}\le x \Big)}\\&=&p\cdot P\Big(\frac{S_n-n(p-q)X_1}{\sqrt{n}}\le x \mid X_1=1\Big)  + q\cdot P\Big(\frac{S_n-n(p-q)X_1}{\sqrt{n}}\leq x \mid X_1=-1\Big) \\
&& \ +\, r\cdot P\Big(\frac{S_n-n(p-q)X_1}{\sqrt{n}} \le x \mid X_1=0\Big)\\
&\dto& (p+q)\cdot {\cal N}_{0,p+q-(p-q)^2}(x)  +r\cdot\delta_{0}(x)
\eeaa
and Proposition \ref{prop51}. 

Conclusion (b) holds true for almost all $\omega \in \{|X_1|=1\}$ by Propostion \ref{prop51} and trivially on $\{X_1=0\}$.\vsb
\end{proof}
\begin{remark}\emph{There is no general LIL in the ERWD, since $X_1=0$ induces the zero process.}
\vsb\end{remark}

\section{Remembering only the distant past 2}\label{anfang2}
\setcounter{equation}{0}
In this section we assume that the elephant only remembers the first two steps, so that ${\cal F}_1=\sigma(X_1),\,
\cfn=\sigma\{X_1, X_2\},\, n \ge 2$. We have to distinguish between the following initial cases:
\begin{itemize}
\item[(a)]\quad $X_1=X_2=1$,  with  $P(X_1=X_2=1)=p^2$;
\item[(b)]\quad $X_1=X_2=-1$, with  $P(X_1=X_2=-1)=pq$;
\item[(c)]\quad $X_1=-X_2=1$, with  $P(X_1=-X_2=1)=pq$;
\item[(d)]\quad $X_1=-X_2=-1$, with  $P(X_1=-X_2=-1)=q^2$;
\item[(e)]\quad $X_1=1, X_2=0$, with  $P(X_1=1, X_2=0)=pr$;
\item[(f)]\quad $X_1=-1, X_2=0$, with  $P(X_1=-1, X_2=0)=qr$;
\item[(g)]\quad $X_1=0$, with  $P(X_1=0)=r$.
\end{itemize}
If we fix the starting values $X_1=x_1$,  $X_2=x_2$ with $x_i \in \{-1,0,1\}$, we get an ERWD with partial sums $T_n=T_n(x_1,x_2), \, n=1,2,\dots $, where $T_2=x_1+x_2$. For $n \ge 3$ the steps are i.i.d.\ summands $\tilde{X}_n$ satisfying $E(\tilde{X}_n)=\big(x_1(p-q) +x_2(p-q)\big)/2=\mu(x_1,x_2)$ and $\var(\tilde{X}_n)= 
(p+q)(x_1^2+x_2^2)/2-\mu^2(x_1,x_2)=\sigma^2(x_1,x_2)\,.$
This yields 
\[E(T_n)= (n-2)\, \mu(x_1,x_2) +x_1+x_2\ttt{ and }\var(T_n)= (n-2)\sigma^2(x_1,x_2).
\]
In particular,
\begin{itemize}
\item[(a)]\quad $\mu(1,1)=p-q\ttt{and}\sigma^2(1,1)=p+q-(p-q)^2$;
\item[(b)]\quad $\mu(-1,-1)=-(p-q)\ttt{and}\sigma^2(-1,-1)=p+q-(p-q)^2$;
\item[(c)]\quad $\mu(1,-1)=0\ttt{and}\sigma^2(1,-1)=p+q$;
\item[(d)]\quad $\mu(-1,1)=0\ttt{and}\sigma^2(-1,1)=p+q$;
\item[(e)]\quad $\mu(1,0)=(p-q)/2\ttt{and}\sigma^2(1,0)=\frac{p+q}{2}-(\frac{p-q}{2})^2$;
\item[(f)]\quad $\mu(-1,0)=-(p-q)/2\ttt{and}\sigma^2(-1,0)=\frac{p+q}{2}-(\frac{p-q}{2})^2$;
\item[(g)]\quad $\mu(0,0)=0\ttt{and}\sigma^2(0,0)=0$.
\end{itemize}
Summarizing we have an ERWD, which in each branch (with given $(x_1,x_2)$) yields a CLT, an  LLN, and an LIL, except,
of course, for case (g). Summarizing we find,  in analogy with the results in Section \ref{anfang1},

\begin{theorem}\label{thm61} Let $S_n=\sumk X_k$. Then \\[2mm]
\emph{(a)} 
\hspace*{1cm}$\displaystyle\frac{S_n}{n}\dto\begin{cases}\phantom{-(}p-q,&\ttt{with probability}p^2,\\
\phantom{-}(p-q)/2,&\ttt{with probability}pr,\\
\phantom{5555}0,&\ttt{with probability}pq+q^2+r,\\
-(p-q)/2,&\ttt{with probability}qr,\\
 -(p-q),&\ttt{with probability}pq,\end{cases}\ttt{as}\nifi$;\\[2.5mm]
\emph{(b)} \hspace*{1cm} $E(S_n/n)\to \dfrac{(p-q)^2}{2}(1+p-q)$\ttt{and}  \\[1mm]
\hspace*{14mm}$\var (S_n/n)\to \dfrac{(p-q)^2}{4}\Big((p+q)(1+3p-q)-(p-q)^2\big(1+(p-q)\big)^2\Big)
\ttt{as}\nifi.$
\end{theorem}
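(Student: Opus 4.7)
The plan is to condition on the pair $(X_1,X_2)$, reducing the problem to seven ordinary i.i.d.\ settings, and then to pass to the mixture.

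First I would fix $(X_1,X_2)=(x_1,x_2)$ with $x_i\in\{-1,0,1\}$. Then for $n\ge 3$ the formula (\ref{uli}), applied with $I_n=\{1,2\}$, gives
\[
E(X_n\mid\cfn)=(p-q)\,\frac{x_1+x_2}{2}=\mu(x_1,x_2),
\]
and more importantly the steps $X_3,X_4,\ldots$ are \emph{conditionally i.i.d.}\ given $(X_1,X_2)$, since each is generated by independently picking uniformly between the two fixed values $x_1,x_2$ and applying the three-point rule (\ref{23}). Therefore, conditioned on $(X_1,X_2)=(x_1,x_2)$, the partial sum $T_n(x_1,x_2)=x_1+x_2+\sum_{k=3}^n X_k$ is a sum of a constant plus $n-2$ i.i.d.\ terms, and Kolmogorov's SLLN yields
\[
\frac{T_n(x_1,x_2)}{n}\asto \mu(x_1,x_2)\ttt{as}\nifi.
\]

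For part (a), I would therefore write, for any bounded continuous $f$,
\[
E\big(f(S_n/n)\big)=\sum_{(x_1,x_2)} P(X_1=x_1,X_2=x_2)\,E\big(f(T_n(x_1,x_2)/n)\big),
\]
where the sum runs over the seven cases listed. Passing $\nifi$ and invoking bounded convergence in each summand gives $f$ evaluated at the seven values of $\mu(x_1,x_2)$ with the corresponding probabilities $p^2,pq,pq,q^2,pr,qr,r$. Merging the cases which yield the same value of $\mu$ — namely cases (c), (d), (g) all give $0$ with total probability $pq+q^2+r$, while each of the remaining four values appears once — produces exactly the five-atom distribution stated.

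For part (b), since $|S_n/n|\le 1$, dominated convergence gives $E(S_n/n)^k\to E(W^k)$ for $k=1,2$, where $W$ denotes the limit random variable from (a). Computing $E(W)$ directly yields
\[
p^2(p-q)+pq(-(p-q))+pr\tfrac{p-q}{2}+qr(-\tfrac{p-q}{2})=(p-q)^2(p+r/2),
\]
and using $r=1-p-q$ to rewrite $p+r/2=(1+p-q)/2$ gives the claimed expression. An analogous computation of $E(W^2)=(p-q)^2(p+q)(p+r/4)$ combined with $(p+r/4)=(1+3p-q)/4$, and then subtracting $(E(W))^2$, yields the stated variance. The only real obstacle is this bookkeeping: keeping the seven branches organised and verifying the trigonometry of substituting $r=1-p-q$ so that the final expression for $\mathrm{Var}(S_n/n)$ matches the compact form announced in the theorem. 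There is no serious analytic difficulty, because conditioning eliminates all the memory and the remaining sum is i.i.d.
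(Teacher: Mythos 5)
Your proposal is correct and follows essentially the same route as the paper: condition on the seven possible values of $(X_1,X_2)$, observe that the subsequent steps are conditionally i.i.d.\ with mean $\mu(x_1,x_2)$ so that each branch obeys the law of large numbers, and then mix the branch limits with the weights $p^2, pq, pq, q^2, pr, qr, r$; the moment computations (including the substitution $r=1-p-q$) check out exactly. The only blemishes are cosmetic: the conditional expectation you display should read $E(X_{n+1}\mid{\cal F}_n)$ rather than $E(X_n\mid{\cal F}_n)$, and ``trigonometry'' should presumably be ``algebra''.
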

Once again, random normalization produces further limit results:
\begin{theorem}\label{thm62}  Let $S_n=\sumk X_k$.Then, \\[2mm]
\emph{(a)} \hspace*{1cm}$\displaystyle
\frac{S_n-n(p-q)\,(X_1+X_2)/2}{\sqrt{n }} \dto p(p+q)\cdot{\cal N}_{0,p+q-(p-q)^2}\\
\hspace*{6,5cm}+r(p+q)\cdot {\cal N}_{0,(p+q)/2-((p-q)/2)^2}\\[2mm]
\hspace*{6,5cm}+q(p+q)\cdot{\cal N}_{0,p+q}+r\delta_0
 \ttt{as}\nifi$;\\[2mm]
\emph{(b)}
$\hspace*{1cm}\displaystyle
\frac{S_n-n(p-q)\,(X_1+X_2)/2}{n} \asto 0 \ttt{as}\nifi $;\\[2mm]
\emph{(c)} For almost all $\omega \in \{\omega : X_1^2(\omega)+X_2^2(\omega)\not=0\}$,\\[2mm]
$\hspace*{1cm}\displaystyle
\limsup_{n \to \infty}(\liminf_{n \to \infty})\frac{S_n-n(p-q)\,(X_1+X_2)/2}{\sqrt{2n\log \log n}} 
= \sigma(X_1,X_2)\quad (- \sigma(X_1,X_2))$.
\end{theorem}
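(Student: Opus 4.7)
The plan is to condition on the starting pair $(X_1,X_2)$ and reduce each of the seven branches (a)--(g) of Section~\ref{anfang2} to a classical limit theorem for sums of i.i.d.\ random variables. Fixing $(X_1,X_2)=(x_1,x_2)$, the increments $\tilde X_k=X_k$, $k\ge 3$, are i.i.d.\ with mean $\mu(x_1,x_2)=(p-q)(x_1+x_2)/2$ and variance $\sigma^2(x_1,x_2)=(p+q)(x_1^2+x_2^2)/2-\mu^2(x_1,x_2)$, both already recorded in the list (a)--(g), and $T_n=x_1+x_2+\sum_{k=3}^n\tilde X_k$. Thus, inside each branch the statement of Theorem~\ref{thm62} is about a centered i.i.d.\ sum plus two bounded initial terms, so the classical CLT, SLLN, and Hartman--Wintner LIL apply directly in the non-degenerate cases (a)--(f); case (g) is trivial since the walk is identically zero.

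For part (a), the ordinary CLT applied to the i.i.d.\ sum gives, conditionally on $(X_1,X_2)=(x_1,x_2)\neq(0,0)$,
\[
\frac{\sum_{k=3}^n\tilde X_k-(n-2)\mu(x_1,x_2)}{\sqrt{n-2}}\dto{\cal N}_{0,\sigma^2(x_1,x_2)}\ttt{as}\nifi.
\]
Since $n(p-q)(x_1+x_2)/2-(n-2)\mu(x_1,x_2)=2\mu(x_1,x_2)$ and $|x_1+x_2|\le 2$ are both bounded, Slutsky's lemma and $\sqrt{(n-2)/n}\to 1$ yield $(S_n-n(p-q)(X_1+X_2)/2)/\sqrt n\dto{\cal N}_{0,\sigma^2(x_1,x_2)}$ on each branch. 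Mixing unconditionally over the seven branches with weights $p^2,pq,pq,q^2,pr,qr,r$ and grouping branches sharing a common variance collapses the three pairs according to $p^2+pq=p(p+q)$, $pq+q^2=q(p+q)$, $pr+qr=r(p+q)$, giving exactly the displayed mixture; branch (g) supplies the $r\delta_0$-term.

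Parts (b) and (c) are handled identically. Kolmogorov's SLLN applied to $\{\tilde X_k\}$ gives $T_n/n\asto\mu(x_1,x_2)=(p-q)(x_1+x_2)/2$ in every non-trivial branch, with $x_1+x_2$ absorbed on the $n$-scale; on branch (g) both sides vanish. For (c), Hartman--Wintner's LIL applied to $\{\tilde X_k\}$ delivers, on every branch with $\sigma(x_1,x_2)>0$,
\[
\limsup_{n\to\infty}\Big(\liminf_{n\to\infty}\Big)\frac{T_n-n(p-q)(x_1+x_2)/2}{\sqrt{2n\log\log n}}=\sigma(x_1,x_2)\;(-\sigma(x_1,x_2)),
\]
again after absorbing the bounded centering discrepancy. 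The exceptional set $\{\sigma(X_1,X_2)=0\}$ coincides with branch (g), i.e., with $\{X_1^2+X_2^2=0\}$, which is precisely what is excluded in the statement.

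The real content lies entirely in the conditioning step; once that is in place the three classical limit theorems take over with no new input required. The only bookkeeping obstacle is verifying that the random centering $n(p-q)(X_1+X_2)/2$ differs from the branchwise conditional mean $(n-2)\mu(X_1,X_2)+X_1+X_2$ only by a bounded random variable, so that the gap is negligible on the $\sqrt n$-scale of (a), the $\sqrt{2n\log\log n}$-scale of (c), and trivially on the $n$-scale of (b). No more serious obstacle is anticipated.
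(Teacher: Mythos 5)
Your proof is correct and follows exactly the route the paper takes: the paper's own proof is just the one-line remark that (a) follows ``by conditioning on $X_1$ and $X_2$, together with the limit theorems for the branches,'' which is precisely your branchwise reduction to the classical CLT, SLLN, and Hartman--Wintner LIL, followed by mixing with the weights $p^2,pq,pq,q^2,pr,qr,r$. Your bookkeeping of the bounded centering discrepancy and the identification of the exceptional set with branch (g) simply supply the details the paper leaves implicit.
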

\proof   The proof of (a) follows by conditioning on $X_1$ and $X_2$, together with the limit theorems for the branches. For further details check the corresponding proof in \cite{111}.
\vsb

We close this section by mentioning the obvious fact that if the elephant remembers the first $m$ random variables for some  $m \in \mathbf{N}$, a further elaboration of our method can be used for additional information.

\section{Remembering only the recent past}\label{ende1}
\setcounter{equation}{0} 
The delayed case differs drastically from the non-delayed case in that, whenever $X_n=0$ for some $n$, then all following summands are also equal to zero, since they emanate from $X_n$, suggesting that for almost all $\omega$ there exists $n_0$, such that $X_n=0$ for all $n>n_0$.  In order to see that this is, indeed, the case we note that 
\[P(X_{n}\neq0)=(p+q)P(X_{n-1}\neq0)= \cdots =(p+q)^{n-1}P(X_1\neq0)=(p+q)^n,\]
from which it follows if $r>0$ that $\sumin P(X_n\neq0)<\infty$, and, hence, by the first Borel--Cantelli lemma that
\begin{equation}\label{finite}P(X_n\neq0\mbox{ i.o.})=0\,.
\end{equation}
Recalling that $r=1-p-q$ we furthermore obtain
\begin{lemma}\label{Lemma7.1} Let $r>0$, and define
\[\tau=\min\{n:X_n=0\}.\] 
Then
\begin{equation}\label{tau}
P(\tau=n)= r(1-r)^{n-1}=(1-p-q)\,(p+q)^{n-1}\quad n\ge 1.  
\end{equation}
Moreover,  $ E(\tau)=\frac{1}{r}=\frac{1}{1-p-q}$.
\end{lemma}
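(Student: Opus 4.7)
The plan is to exploit the fact that in the ``recent past'' model the sequence $\{X_n\}$ is itself a Markov chain on $\{-1,0,+1\}$, so that the first hitting time of $0$ is geometric.

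First I would record the one-step transitions. By the very definition of the model in this section, given $X_{n-1}$ the value $X_n$ equals $+X_{n-1}$ with probability $p$, $-X_{n-1}$ with probability $q$, and $0$ with probability $r$. Consequently, conditional on $\{X_{n-1}\ne0\}$ we have $P(X_n=0\mid X_{n-1}\ne0)=r$ and $P(X_n\ne0\mid X_{n-1}\ne0)=p+q=1-r$, whereas conditional on $\{X_{n-1}=0\}$ the variable $X_n$ is identically $0$. In particular the state $0$ is absorbing, which is precisely the intuitive statement preceding the lemma and the reason for (\ref{finite}). Also, since $P(X_1=0)=r$, the initial distribution agrees with the generic transition out of a non-zero state.

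Next I would apply the Markov property to the event $\{\tau=n\}=\{X_1\ne0,\dots,X_{n-1}\ne0,\,X_n=0\}$. For $n=1$ this is simply $P(\tau=1)=P(X_1=0)=r$. For $n\geq2$, factoring along the chain gives
\[
P(\tau=n)=P(X_1\ne0)\prod_{k=2}^{n-1}P(X_k\ne0\mid X_{k-1}\ne0)\cdot P(X_n=0\mid X_{n-1}\ne0)
=(p+q)^{n-1}\,r,
\]
which is the asserted formula $r(1-r)^{n-1}$.

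For the mean, the formula above identifies $\tau$ as a geometric random variable with success probability $r$, so $E(\tau)=\sum_{n\geq1}n\,r(1-r)^{n-1}=1/r=1/(1-p-q)$. There is no real technical obstacle here; the only point requiring a little care is the verification that the conditional probability of $X_k\ne0$ given the whole past $\{X_1\ne0,\dots,X_{k-1}\ne0\}$ collapses to the one-step transition $P(X_k\ne0\mid X_{k-1}\ne0)=p+q$, which holds precisely because in this model the memory of the elephant reduces to $X_{k-1}$.
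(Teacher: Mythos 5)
Your proposal is correct and follows essentially the same route as the paper: both factor the event $\{\tau=n\}=\{X_1\ne0,\dots,X_{n-1}\ne0,\,X_n=0\}$ using the one-step memory to obtain $r(1-r)^{n-1}$ and then read off the geometric mean. You merely spell out the Markov-chain justification that the paper leaves implicit.
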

\begin{proof} We have
\[P(\tau=n)=P(X_k\neq0,X_2\neq0, \ldots, X_{n-1}\neq0, X_n=0)=(1-r)^{n-1}r,\]
a geometric distribution whose expected value is as claimed.\vsb
\end{proof}

This, together with (\ref{finite}), yields the following result.
\begin{theorem}\label{Thm7.2}
If $r>0$, then
\[S_n\asto S_\tau\ttt{as}\nifi\ \ttt{and} \  E(S_\tau)=\frac{p-q}{1-p+q}\,.\]
\end{theorem}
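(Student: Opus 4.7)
My plan is to obtain the almost sure convergence as a trivial consequence of the absorbing nature of state $0$ for the increment sequence, and then derive the expectation by passing to the limit in $E(S_n)$ via dominated convergence.

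For the almost sure part, I would note that the recursive definition of the recent-past walk makes $0$ an absorbing state for the increments: if $X_n=0$, then $X_{n+1}\in\{X_n,-X_n,0\}$ are all equal to $0$, so by induction $X_m=0$ for every $m\geq n$ and hence $S_m=S_n$ for every $m\geq n$. Lemma \ref{Lemma7.1} (or, equivalently, (\ref{finite}) via Borel--Cantelli) tells us that $\tau$ is a.s.\ finite, so $S_n=S_\tau$ for every $n\geq\tau$, which immediately yields $S_n\asto S_\tau$.

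For the expectation I would invoke dominated convergence to pass $E(S_n)\to E(S_\tau)$. Since $|X_k|\leq 1$, we have $|S_n|\leq n\leq\tau$ on $\{n\leq\tau\}$ and $|S_n|=|S_\tau|\leq\tau$ on $\{n>\tau\}$, so $|S_n|\leq\tau$ a.s.\ for every $n$; the dominating variable $\tau$ is integrable by Lemma \ref{Lemma7.1}. It then suffices to compute $\lim_n E(S_n)$, which I would do via the Markov-type identity $E(\xnp\mid X_n)=(p-q)X_n$: taking expectations gives the recursion $E(X_{n+1})=(p-q)E(X_n)$, and iterating yields $E(X_n)=(p-q)^{n-1}E(X_1)=(p-q)^n$. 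Since $r>0$ forces $|p-q|<p+q<1$, the geometric series sums to
\[
E(S_n)=\sum_{k=1}^n (p-q)^k\longrightarrow\frac{p-q}{1-(p-q)}=\frac{p-q}{1-p+q}\quad\mbox{as }\nifi.
\]

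The only point where care is required is matching the implicit initial law of $X_1$ to the model used in this section: the identity $P(X_n\neq 0)=(p+q)^n$ appearing just before Lemma \ref{Lemma7.1} forces $P(X_1\neq 0)=p+q$, and the symmetric three-point choice $P(X_1=1)=p$, $P(X_1=-1)=q$, $P(X_1=0)=r$ is what delivers $E(X_1)=p-q$ and hence the stated constant $\tfrac{p-q}{1-p+q}$. Without this calibration the limiting expectation would carry a different multiplicative constant, so confirming the initial distribution is the main (and essentially only) subtlety in the argument.
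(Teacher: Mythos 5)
Your proof is correct, and for the expectation it takes a genuinely different route from the paper. The almost sure convergence is handled the same way in both (absorption of the increments at $0$ plus finiteness of $\tau$ from (\ref{finite})). For $E(S_\tau)$, the paper conditions on $\{\tau=n\}$, observes that the walk conditioned on having no zero steps is an ordinary (undelayed) recent-past ERW with transition probabilities $\tilde p=p/(p+q)$, $\tilde q=q/(p+q)$, imports the formula for $E(\tilde T_n)$ from Section 8 of \cite{111}, and then sums against the geometric law of $\tau$. You instead dominate $|S_n|$ by the integrable variable $\tau$, pass to the limit in $E(S_n)$ by dominated convergence, and compute $E(S_n)=\sum_{k=1}^n(p-q)^k$ directly from the one-step identity $E(X_{n+1}\mid X_n)=(p-q)X_n$, using $|p-q|\le p+q<1$. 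Your argument is more elementary and self-contained (it needs nothing from \cite{111} and no conditioning on $\tau$), while the paper's decomposition exposes the conditional ERW structure and is the natural starting point for the higher moments alluded to in the remark following the theorem. You are also right to flag the calibration of the initial law: the identity $P(X_n\neq 0)=(p+q)^n$ and the paper's own use of the weights $p/(p+q)$ and $q/(p+q)$ confirm that $P(X_1=1)=p$, $P(X_1=-1)=q$, $P(X_1=0)=r$ is the intended initial distribution, which is exactly what your computation requires.
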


\begin{proof} 
We just have to verify the formula for $E(S_\tau)$. Toward that end,
\[
E(S_\tau)=\sum_{n=1}^\infty E(S_n\,|\, \tau=n) P(\tau=n)=\sum_{n=1}^\infty E(S_{n-1}\,|\,X_1\not=0,\dots,X_{n-1}\not=0) \cdot r (1-r)^{n-1}.\]
Now,  $S_{n-1}$ given $X_1\not=0,\dots,X_{n-1}\not=0$ is constituted via an ordinary ERW $\tilde{S}_{n-1}$ with summands $\tilde{X}_k$, obeying transition probabilities $\tilde{p}=p/(p+q)$ for $+1$ and $\tilde{q}=q/(p+q)$ for $-1$. Now, let, for $n\geq1$,  $\tilde{T}_n=\sum_{k=1}^{n} \tilde{X}_k$  with given $\tilde{X_1}=1$.
By the results in \cite{111}, Section 8, we find that
\[E(\tilde{T}_n)=\frac{1-(2\tilde{p}-1)^n}{2(1-\tilde{p})}.\]
If we start with $\tilde{X}_1=-1$ we obtain the negative value of that.
Hence,
\beaa
E(\tilde{S}_{n-1})&=&\frac{p}{p+q}E(\tilde{T}_{n-1})-\frac{q}{p+q}E(\tilde{T}_{n-1})\\
&=& \Big(\frac{p}{p+q}-\frac{q}{p+q}\Big)\, \frac{1-\Big(\frac{2p-(p+q)}{p+q}\Big)^{n-1}}{2\Big(1-\frac{p}{p+q}\Big) }=\frac{p-q}{2q}\Big(1-\Big(\frac{p-q}{p+q}\Big)^{n-1}\Big)\,.
\eeaa
The law of total probability then tells us that
\beaa
E(S_\tau)&=& \sum_{n=1}^\infty E(\tilde{S}_{n-1}) r(1-r)^{n-1}
= \frac{p-q}{2q} \sum_{n=1}^\infty \Big(1-\Big(\frac{p-q}{p+q}\Big)^{n-1}\Big) (1-p-q)(p+q)^{n-1}\\
&=& \frac{p-q}{2q} \Big(1- \frac{1-p-q}{1-(p-q)}\Big)=\frac{p-q}{1-p+q}\,.
\eeaa
\end{proof}
\begin{remark}\emph{ One can also calculate higher moments based on those of $\tilde{T}_n$, but the calculations become very tedious, as all remainder terms in \cite{111} have to be taken into account.}\vsb
\end{remark}
A logical next section would contain analogous elaborations in the case when the elephant remembers the  two most recent steps.  We leave it to the reader(s) to delve further on this matter.

\section{Remembering the distant as well as the recent past}\label{beides}
\setcounter{equation}{0}
We, finally, consider the case when the elephant has a clear memory of the early steps as well as the very recent ones. 
One may imagine a(n old) person who remembers the early childhood and events from the last few days but nothing in between. The most elementary case, which is the one we shall investigate,  is when $\cfn =\sigma\{X_1,X_n\}$ for all $n\geq2$.  As always we begin by assuming that $X_1=1$. Then, for  $n\geq2$,
\[E(X_2)=E\big(E(X_2\mid X_1)\big)=E\big((p-q)X_1\big)=p-q,\]
and
\[
E(\xnp)=E\big(E(\xnp\mid\cfn)\big)=(p-q)E\Big(\frac{X_1+X_n}{2}\Big)=\frac{p-q}{2}\cdot(1+E(X_n)),\]
so that, via Proposition \ref{diff}(ii),
\bea \label{expxnn}
E(X_n)=\frac{p-q}{2+q-p}+\Big(\frac{p-q}{2}\Big)^{n-1}\cdot \frac{2(1+q-p)}{2+q-p}\ttt{for} n\geq1,
\eea 
and, hence, 
\bea\label{exptnn}
E(T_n)= n\cdot\frac{p-q}{2+q-p}+ \frac{4(1+q-p)}{(2+q-p)^2} +o(1)\ttt{as}\nifi\,.
\eea
In order to establish a difference equation for the second moment, we first have to compute the mixed moment. The usual approach, inserting $E(T_n)$ from above (noting that   $E(\xnp^2)=p+q$),  yields
\beaa
E(\tnp\xnp)&=&E\Big(T_{n}(p-q)\frac{1+X_{n}}{2}\Big)+E(\xnp^2)
=\frac{p-q}{2}\big( E(T_{n})+E(T_{n}X_{n})\big) +p+q\\
&=&\frac{p-q}{2}\cdot E(T_{n})+\frac{p-q}{2}\cdot E(T_{n}X_{n})\big) +p+q\\[2mm]
&=& a E(T_{n}X_{n}) +b_n,
\eeaa
with 
\[a=\frac{p-q}{2}\ttt{ and } b_n=n\cdot\frac{(p-q)^2}{2(2+q-p)}+\frac{2(1+q-p)(p-q)}{(2+q-p)^2}+p+q.\] 
Exploiting  Proposition \ref{diff}(i), we then obtain, letting  $\nifi$,
\beaa E(T_nX_n)&=&\Big((n-1) \cdot\frac{(p-q)^2}{2(2+q-p)} +\frac{2(1+q-p)(p-q)}{(2+q-p)^2}+p+q\Big)\Big/
\Big(1-\frac{p-q}{2}\Big)\\
  &&\hskip1pc - \,\frac{p-q}{2n}\Big((n-1) \cdot\frac{(p-q)^2}{2(2+q-p)} 
+\frac{2(1+q-p)(p-q)}{(2+q-p)^2}+p+q\Big)\Big/\Big(1-\frac{p-q}{2}\Big)^2\\[2mm]
&&\hskip2pc \times\big(1+o(1)\big)\\[2mm]
&=&n\cdot \frac{(p-q)^2}{(2+q-p)^2}+\frac{2(p-q)(2+3q-3p)}{(2+p-q)^3}+\frac{2(p+q)}{2+q-p} +o(1).
\eeaa
Next we note (recall (\ref{uli5})) that,  for $n\geq1$,
\beaa
E(T_n^2)&=&E(T_{n-1}^2)+2E(T_{n-1}X_n) +E(X_n^2)\\[2mm]
&=& E(T_{n-1}^2)+2E(T_{n}X_{n})-E(X_n^2)\\[2mm]
&=&E(T_{n-1}^2)+2 \cdot \Big(n\cdot\frac{(p-q)^2}{(2+q-p)^2} +
\frac{2(p-q)(2+3q-3p)}{(2+p-q)^3}+\frac{2(p+q)}{2+q-p}+o(1)\Big)-(p+q),\eeaa
after which telescoping tells us that 
\beaa
 E(T_{n}^2)&=& E(X_1^2)+\frac{n(n+1)}{2}\cdot2 \cdot \frac{(p-q)^2}{(2+q-p)^2}\\[2mm]
&&\hskip2pc +\,n\cdot 2\cdot\Big(\frac{2(p-q)(2+3q-3p)}{(2+p-q)^3}+\frac{2(p+q)}{2+q-p} +o(1)\Big)
-n\cdot(p+q)\\[2mm]
&=&n^2\cdot\frac{(p-q)^2}{(2+q-p)^2} +n\cdot\Big(\frac{(p-q)^2}{(2+q-p)^2}+\frac{4(p-q)(2+3q-3p)}{(2+q-p)^3}+\frac{4(p+q)}{2+q-p}\nonumber\\
&&\hskip2pc -\,  (p+q)\Big)+o(n)\ttt{as}\nifi.
\eeaa
Joining the expressions for the first two moments now tells us that the variance is linear in $n$:
\bea\label{vartnzz}
\var(T_n)&=&n^2\cdot\frac{(p-q)^2}{(2+q-p)^2} 
+n\cdot \Big(\frac{(p-q)^2}{(2+q-p)^2}+\frac{4(p-q)(2+3q-3p)}{(2+q-p)^3}+\frac{4(p+q)}{2+q-p} \nonumber\\[2mm]
&&\hskip2pc-\, (p+q) \Big)  +o(n)-\Big( n\cdot\frac{p-q}{2+q-p}+ \frac{4(1+q-p)}{(2+q-p)^2} \Big)^2+o(n)\nonumber\\[2mm]
&=& n \cdot\Big(p+q+\frac{(p-q)^2}{(2+q-p)^2}+\frac{4(p-q)(2+3q-3p)}{(2+q-p)^3} + \frac{4(p+q)}{2+q-p}-2(p+q)\nonumber\\
&&\hskip2pc -\,2\cdot\frac{p-q}{2+q-p}\cdot \frac{4(1+q-p)}{(2+q-p)^2} \Big)  +o(n)\nonumber\\
&=&n\cdot\Big(p+q  +\frac{p-q}{(2+q-p)^3}\Big((p-q)(-2+q-p)+2(p+q)(2+q-p)^2\Big)\Big)
 +o(n) \nonumber\\[2mm]
&=&n\cdot\sigma_T^2 +o(n)\ttt{as}\nifi\,.
\eea
Given the expressions for mean and variance, a weak law is now immediate:
\bea\label{wllntn10}
\frac{T_n}{n}\pto \frac{p-q}{2+q-p}\ttt{as}\nifi.
\eea
In analogy with our earlier results this suggets that $T_n$ is asymptotically normal. That this is, indeed, the case follows from the fact that $\{T_n,\ n\geq1\}$ is, once again,  a uniformly ergodic Markov chain, 
since $P(X_{n+1}=k\,|\, X_2,\dots,X_n)=P(X_{n+1}=k\,|\, X_n)$ for $k=-1,0,1$ and we have a stationary recurrent and finite state Markov chain. We may thus apply Corollary 5 of \cite{jones} (cf.\ also \cite{ibralinn}, Theorem 19.1) to conclude that $T_n-E(T_n)$ is asymptotically normal with mean zero and variance $\sigma^2_Tn$, with $\sigma^2_T$ as defined in (\ref{vartnzz}). This establishes, in view of (\ref{exptnn}),  that
\bea\label{clt10prel}
\frac{T_n-n\,\frac{p-q}{2+q-p}}{\sigma_T\sqrt{n}}\dto {\cal N}_{0,\,1} \ttt{as}\nifi.
\eea
For $X_1=-1$ and $X_1=0$ we obtain the usual scenarios. This summarizes our findings into 
\beaa
 E(S_n)&=&p E(T_n)-q \,E(-T_n)+0=(p-q)E(T_n),\\
E(S_n^2)&=&(p+q) \,E(T_n^2),\\
\var (S_n)&=&(p+q) E(T_n^2)-(p-q)^2(E(T_n))^2,
\eeaa
and, hence, as $\nifi$,
\bea\label{snlimit}
E(S_n/n)\to\frac{(p-q)^2}{2+q-p}\ttt{and}\var(S_n/n)\to
\sigma_S^2 =\frac{(p-q)^2}{(2+q-p)^2}\big(p+q-(p-q)^2\big)\,.
\eea
In analogy with our earlier results we finally arrive at the following  two limit theorems for $S_n$:
\begin{theorem}\label{snast} We have
\[\frac{S_n}{n}\dto S=\begin{cases}\phantom{-\,}\dfrac{p-q}{2+q-p},&\ttt{with probability} p,\\[1mm]
\hspace*{12mm} 0 ,&\ttt{with probability} r,\\[1mm]
 -\,\dfrac{p-q}{2+q-p},&\ttt{with probability}q,\end{cases}\ttt{as}\nifi.\]
Moreover, $E(S_n/n)^r\to E(S^r)$ for all $r>0$, since $|S_n/n|\leq 1$ for all $n$.
\end{theorem}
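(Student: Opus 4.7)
The plan is to condition on the first step $X_1\in\{1,-1,0\}$, whose outcomes occur with probabilities $p$, $q$, $r$, and to reduce each of the three branches to the weak law (\ref{wllntn10}) already established on $\{X_1=1\}$.

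On $\{X_1=1\}$, $S_n$ coincides with $T_n$, so (\ref{wllntn10}) gives $T_n/n\pto (p-q)/(2+q-p)$, i.e.\ convergence in distribution to the corresponding point mass. On $\{X_1=-1\}$, the map $X_k\mapsto -X_k$ sends the conditional law of the walk into that of the $X_1=1$-walk, because the transition rule (\ref{23}) is invariant under a global sign change (the weights $p,q,r$ attached to $+X_K,-X_K,0$ remain unchanged); hence $S_n/n\pto -(p-q)/(2+q-p)$ in this branch. On $\{X_1=0\}$ a short induction yields $X_n\equiv 0$ for every $n\geq 1$: if $X_1=X_2=\cdots=X_n=0$ then the memory $\sigma\{X_1,X_n\}$ contains only zeros, and each of the three options $+X_K$, $-X_K$, $0$ for $X_{n+1}$ is zero, so $X_{n+1}=0$ deterministically; consequently $S_n/n\equiv 0$. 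Assembling the three cases via the law of total probability produces the claimed three-point limit distribution.

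The moment convergence is then a direct consequence of the uniform bound $|S_n/n|\leq 1$: once $S_n/n\dto S$ is established, the map $x\mapsto x^r$ is bounded and continuous on $[-1,1]$, so the Portmanteau (equivalently bounded-convergence) theorem gives $E((S_n/n)^r)\to E(S^r)$ for every $r>0$. I do not anticipate any real obstacle; the only step that demands even a line of justification is the $X_1=-1$ symmetry, and all the heavy computational work --- the difference equations for $E(T_n)$ and $\var(T_n)$ and the invocation of the Markov-chain weak law --- is already completed in the derivation preceding the statement.
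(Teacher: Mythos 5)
Your proposal is correct and follows essentially the same route as the paper, which leaves the argument implicit ("in analogy with our earlier results") but clearly intends exactly this: condition on $X_1$, apply the weak law (\ref{wllntn10}) on $\{X_1=1\}$, its mirror image on $\{X_1=-1\}$, the zero process on $\{X_1=0\}$, and deduce moment convergence from $|S_n/n|\leq 1$. No gaps.
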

\begin{theorem}\label{thmclt10} \qquad$\dfrac{S_n-n\frac{(p-q)X_1}{2+q-p}}{\sqrt{n}}\dto (p+q){\cal N}_{0,\sigma_T^2} +r\delta_0\ttt{as}\nifi$.
\end{theorem}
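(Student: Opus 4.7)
The plan is to condition on $X_1$ and assemble the three branch limits. Recall from the preceding analysis that on the event $\{X_1=1\}$ the partial sum $S_n$ coincides with $T_n$, the process studied in detail above, and that equation (\ref{clt10prel}) already supplies the CLT in that branch, namely
\[
\frac{T_n - n(p-q)/(2+q-p)}{\sqrt{n}} \dto \mathcal{N}_{0,\sigma_T^2}\ttt{as}\nifi.
\]
On $\{X_1=-1\}$, the discussion at the end of Section \ref{defs} shows that the walk is the time-axis mirror image of $T_n$; hence $S_n \stackrel{d}{=} -T_n$ in this branch, which by symmetry gives the same limit (a centered normal is invariant under sign change). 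On $\{X_1=0\}$ the walk is identically zero, so the normalized quantity is exactly $0$ for every $n$ and contributes a point mass at $0$.

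Next I would write, for fixed $x$,
\[
P\Big(\frac{S_n-n(p-q)X_1/(2+q-p)}{\sqrt{n}}\le x\Big)=\sum_{j\in\{-1,0,1\}}P(X_1=j)\,P\Big(\frac{S_n-n(p-q)j/(2+q-p)}{\sqrt{n}}\le x\,\Big|\,X_1=j\Big),
\]
and pass to the limit in each of the three conditional probabilities using the three observations above. Since $P(X_1=1)=p$, $P(X_1=-1)=q$ and $P(X_1=0)=r$, the right-hand side converges to
\[
(p+q)\,\mathcal{N}_{0,\sigma_T^2}(x)+r\,\delta_0(x),
\]
which is exactly the claim.

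The only genuine content is step one, the CLT on the branch $\{X_1=1\}$, and that was already justified via the uniformly ergodic Markov chain argument leading to (\ref{clt10prel}); all remaining steps are a conditioning argument plus the mirror symmetry. Consequently there is no real obstacle: the variance constant $\sigma_T^2$ is precisely the one computed in (\ref{vartnzz}), and the limiting mixture matches the mixture structure already encountered in Theorems \ref{thm52}, \ref{thm53}, and \ref{snast}.
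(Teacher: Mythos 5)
Your proposal is correct and follows essentially the same route the paper intends: the paper states Theorem \ref{thmclt10} without explicit proof, appealing to ``analogy with our earlier results,'' which is precisely the conditioning-on-$X_1$ decomposition used in the proof of Theorem \ref{thm53}, combined with the branch CLT (\ref{clt10prel}), the mirror symmetry for $X_1=-1$, and the zero process for $X_1=0$. Nothing further is needed.
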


\section{Final remarks}\label{anm}
\setcounter{equation}{0}

\textbf{(i)}\quad
The extension from \cite{bercu} and \cite{111} consists of allowing for delays at each step--there is a probability $r$ that the elephant stays put at each step. Now, ''conditional'' on the value of $r$ we note that our limit points are functions of $p-q$. Therefore, replacing $q$ by $1-p-r$, we can rewrite our results in terms of functions of $2p-1-r$. In particular, if $r=0$, we observe more clearly how our results reduce to, and extend those of, the two mentioned predecessors.

\noindent\textbf{(ii)}\quad
When the elephant remembers the whole past there is a phase transition at $p=3/4$; see \cite{bercu}. In our setting (Theorem 4.1) the critical point is when $p-q=1/2$. Reinterpreting this as in the previous remark this means that the critical point  is $2p-1-r=1/2$, which reduces to $p=3/4$ when $r=0$. As in \cite{111} there is no phase transition in the case of restricted memories, and, as there, one might ask for the the breaking point. 

\noindent\textbf{(iii)}\quad As was mentioned in \cite{111},  one might think of cases when the memory covers early and/or recent steps, where the length of the memory depends on $n$, typically $\log n$ or $\sqrt{n}$. This may, in addition, have some interest with regard to the previous remark.

\subsection*{Acknowledgement}
We wish to thank a referee for his/her careful scrutiny of the mansucript including the discovery of some obscurity.

\medskip\noindent {\small Allan Gut, Department of Mathematics,
Uppsala University, Box 480, SE-751\,06 Uppsala, Sweden;\\
Email:\quad \texttt{allan.gut@math.uu.se}\\
URL:\quad \texttt{http://www.math.uu.se/\~{}allan}}
\\[4pt]
{\small Ulrich Stadtm\"uller, Ulm University, Department of Number
Theory
and Probability Theory,\\ D-89069 Ulm, Germany;\\
Email:\quad \texttt{ulrich.stadtmueller@uni-ulm.de}\\
URL:\quad
\texttt{http://www.mathematik.uni-ulm.de/en/mawi/institute-of-number-theory-and-probability-\\theory/people/stadtmueller.html}}

\end{document}